\documentclass[oneside]{article}
\usepackage{amsmath,amssymb,amscd,amsthm,verbatim,alltt,amsfonts,array}
\usepackage{mathrsfs}
\usepackage[english]{babel}
\usepackage{latexsym}
\usepackage{amssymb}
\usepackage{euscript}
\usepackage{graphicx}
\usepackage{arydshln}

\setlength{\dashlinedash}{0.2pt}
\setlength{\dashlinegap}{4.5pt}
\setlength{\arrayrulewidth}{0.2pt}
\setlength{\textwidth}{13.9cm}

\newtheorem{theorem}{Theorem}
\theoremstyle{plain}

\newtheorem{corollary}{Corollary}

\newtheorem{lemma}{Lemma}

\numberwithin{equation}{section}

\begin{document}

{\footnotesize%
\hfill
}

  \vskip 1.2 true cm

\begin{center} {\bf A remark on extremal stability of configuration spaces} \\
          {by}\\
{\sc Muhammad Yameen}
\end{center}

\pagestyle{myheadings}
\markboth{A remark extremal stability of configuration spaces}{Muhammad Yameen}

\begin{abstract}
We explicitly study the extremal stability of configuration spaces of complex projective spaces of any dimension, and show that the homology groups are vanish in extremal stable range. As a consequence, we give an affirmative answer of the question of Knudsen, Miller and Tosteson.
\end{abstract}

\begin{quotation}
\noindent{\bf Key Words}: {Configuration spaces, extremal stability, Hilbert function, Chevalley–Eilenberg complex}

\noindent{\bf 2010 Mathematics Subject Classification}:  Primary 55R80, Secondary 55P62.
\end{quotation}

\thispagestyle{empty}

\section{Introduction}\label{Sec1}

\label{sec:intro}
For any manifold $M$, let
$$F_{k}(M):=\{(x_{1},\ldots,x_{k})\in M^{k}| x_{i}\neq x_{j}\,for\,i\neq j\}$$
be the configuration space of $k$ distinct ordered points in $M$ with induced topology. The symmetric group $S_{k}$ acts on $F_{k}(M)$ by permuting the coordinates. The quotient $$C_{k}(M):=F_{k}(M)/S_{k} $$
is the unordered configuration space with quotient topology. It is a
classical problem in algebraic topology to
understand the homology and cohomology of such spaces. Arnold proved integral cohomological stability for $\mathbb{R}^{2}$:
$$H^{i}(C_{2i-2}(\mathbb{R}^{2});\mathbb{Z})\cong H^{i}(C_{2i-1}(\mathbb{R}^{2});\mathbb{Z})  \cong H^{i}(C_{2i}(\mathbb{R}^{2});\mathbb{Z})\cong\ldots$$
The isomorphisms (for $k$ large depending on $i$)
$$H^{i}(C_{k}(M);\mathbb{Z})\cong H^{i}(C_{k+1}(M);\mathbb{Z})\cong H^{i}(C_{k+2}(M);\mathbb{Z})\cong\ldots$$
were generalized for open manifolds by McDuff \cite{MD} and Segal \cite{S}. Using representation stability, Church \cite{C} proved that
$$H^{i}(C_{k}(M);\mathbb{Q})\cong H^{i}(C_{k+1}(M);\mathbb{Q})\cong H^{i}(C_{k+2}(M);\mathbb{Q})\cong\ldots$$
for $k>i$ and $M$ a connected oriented manifold of finite type. This result was extended by Kundsen \cite{Kn}. More recently, Knudsen, Miller and Tosteson \cite{KMT} study the extremal stability (the stability of top cohomology) of unordered configuration spaces of manifolds. They asked the following question:\\\\
\textbf{Question.} (see Question 4.10 of \cite{KMT}) Suppose that $H_{d-1}(M;\mathbb{Q})=0.$ For $i\in\mathbb{N},$ is the Hilbert function 
$$k\mapsto \text{dim}H_{k(d-2)+i}(C_{k}(M);\mathbb{Q})$$
eventually a quasi-polynomial?\\\\
Here, the dimension of $M$ is $d.$ The above question has an affirmative answer under the further assumption that $H_{d-2}(M;Q) = 0$ by Theorem 4.9 of \cite{KMT}. Here is our main result:
\begin{theorem}\label{theorem-main}
For $i, m\in\mathbb{N},$ the Hilbert function 
$$k\mapsto \emph{dim}H_{k(2m-2)+i}(C_{k}(\emph{CP}^{m});\mathbb{Q})$$
is eventually a quasi-polynomial.
\end{theorem}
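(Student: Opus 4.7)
My plan is to use Knudsen's Chevalley--Eilenberg model \cite{Kn} for the rational homology of unordered configuration spaces, applied to the closed orientable formal manifold $M = \mathbb{CP}^m$, and to prove the stronger statement that $H_{k(2m-2)+i}(C_k(\mathbb{CP}^m);\mathbb{Q}) = 0$ for $k \ge k_0(i,m)$. The zero function is a quasi-polynomial, so this would imply Theorem~\ref{theorem-main} and answer Question~4.10 of \cite{KMT} affirmatively for $\mathbb{CP}^m$, a case not covered by their Theorem~4.9 since $H_{d-2}(\mathbb{CP}^m;\mathbb{Q}) \ne 0$.

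For $\mathbb{CP}^m$, Knudsen's model is concretely the bigraded dg-algebra
\[
\mathcal{E} = \mathbb{Q}[e_0,\dots,e_m]\otimes\Lambda(\bar e_0,\dots,\bar e_m),
\]
with $e_j$ of weight $1$ and homological degree $2j$, and $\bar e_j$ of weight $2$ and odd homological degree; the differential has the form $\partial \bar e_j = \sum_{p,q}\alpha^j_{pq}\,e_p e_q$ with coefficients read off from the diagonal class of $\mathbb{CP}^m$. The weight-$k$ piece of this complex computes $H_*(C_k(\mathbb{CP}^m);\mathbb{Q})$. Combining the weight and degree constraints in the extremal bidegree yields a single affine relation among the exponents $a_j$, the $\bar e$-index subset $\{j_r\}$ and the number $l$ of $\bar e$-factors; since $|e_{m-1}| = 2m-2 = d-2$, the coefficient of $a_{m-1}$ vanishes, so the chain space in bidegree $(k,\,k(2m-2)+i)$ is parametrized by a free integer parameter and has dimension growing linearly in $k$ rather than being bounded. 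This linear growth is precisely the qualitative feature that causes \cite{KMT} Theorem~4.9 to fail for $\mathbb{CP}^m$.

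The main step is then to show that the map $\partial\colon \mathcal{E}_{k,\,k(2m-2)+i+1} \to \mathcal{E}_{k,\,k(2m-2)+i}$ is surjective for $k \ge k_0(i,m)$; because every element of the target is automatically a cycle, surjectivity gives vanishing of the homology. I would arrange both source and target chains into families indexed by the bounded data $(a_m, a_0, \dots, a_{m-2}, \{j_r\}, l)$ and, for each pairing of families, show that the induced matrix of structure constants $\alpha^j_{pq}$ is triangular with invertible diagonal under a suitable auxiliary ordering on this bounded data. Small-$k$ verifications in the case $m=2$ (for instance the direct calculation that $\partial\colon\mathcal{E}_{k,2k+1}\to\mathcal{E}_{k,2k}$ has full rank for all $k \ge 3$, giving the expected vanishing) provide strong evidence that the triangular structure is genuinely present.

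The hardest part will be establishing this triangularity uniformly in $k$, $i$, and $m$. Each $\partial\bar e_j$ carries several simultaneously nonzero terms $e_pe_q$ coming from the intersection coproduct on $H_*(\mathbb{CP}^m)$, and these terms mix the different families; one must choose the right secondary filtration (for example by $a_m$, or by $l$, or by a lexicographic combination of the two) under which the induced sub-quotient differentials have invertible diagonal blocks. Because $H_{d-2}(\mathbb{CP}^m;\mathbb{Q})=\mathbb{Q}$, the clean dimension count of \cite{KMT} Theorem~4.9 is unavailable, and the triangularity must be extracted from the actual values of the $\alpha^j_{pq}$, that is, from the full multiplicative structure of $H^*(\mathbb{CP}^m)$ and not merely its Betti numbers.
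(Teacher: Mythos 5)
Your overall strategy---prove that the extremal homology groups of $C_{k}(\text{CP}^{m})$ eventually vanish, using the Chevalley--Eilenberg model---is exactly the strategy of the paper, and your diagnosis of why Theorem 4.9 of \cite{KMT} does not apply (the unbounded families of monomials created by the degree-$(2m-2)$ class) is correct. However, the argument as proposed has a genuine gap, and its central step is not only unproven but false as stated. You propose to show that $\partial\colon \mathcal{E}_{k,\,k(2m-2)+i+1}\to\mathcal{E}_{k,\,k(2m-2)+i}$ is \emph{surjective}, justified by the claim that ``every element of the target is automatically a cycle.'' That claim fails: in the paper's notation the monomial $v_{2m-2}^{k-2}w_{4m-3}$ lies in total degree $k(2m-2)+1$ and satisfies $\partial(v_{2m-2}^{k-2}w_{4m-3})=2v_{2m-2}^{k-1}v_{2m}\neq 0$, so it is not a cycle. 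Since surjectivity onto the whole chain group would force every chain in that degree to be a boundary and hence a cycle, surjectivity is impossible; what you actually need is that every \emph{cycle} is a boundary, which requires identifying the kernel of $\partial$ on the extremal chain groups, not hitting all of them. Moreover, the remaining content of your plan---a triangular structure on the unboundedly large matrix of structure constants, uniform in $k$, $i$ and $m$---is explicitly deferred and supported only by small-$k$ checks for $m=2$; that is the entire difficulty, so the proposal does not yet constitute a proof.

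The device you are missing is the paper's Lemma \ref{lemma1}: the ideal $\Omega_{k-2}^{*,*}(\text{CP}^{m})\cdot(v_{2m}^{2},w_{4m-1})$ is acyclic, with the explicit contracting homotopy $h(v_{2m}^{2}A+Bw_{4m-1})=w_{4m-1}A$, so one may pass to the quotient complex in which $v_{2m}^{2}=0$ and $w_{4m-1}=0$. This single reduction destroys the unbounded families you identify (the fundamental class $v_{2m}$ can now occur at most once in any monomial), caps the top degree of the reduced complex at $k(2m-2)+3$, and leaves extremal chain groups of dimension at most $3$, where the differential is computed by hand and the vanishing for $i=1,2,3$ read off directly. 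If you want to salvage your approach without this reduction, you would have to replace surjectivity by an exactness argument on the honest (growing) chain groups, which is essentially equivalent to rediscovering the acyclic ideal. Finally, note that your claimed vanishing is false for $m=1$: there $2m-2=0$ and $\dim H_{i}(C_{k}(\text{CP}^{1});\mathbb{Q})$ is eventually a \emph{nonzero} constant by Church's homological stability \cite{C}, so that case must be split off, as the paper does.
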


\section{Chevalley–Eilenberg complex}
The cohomology and homology of configuration spaces have received a lot of attention, since the work by Arnold \cite{A}. B\"{o}digheimer--Cohen--Taylor \cite{B-C-T} studied the homology of $C_{k}(M)$ in the case of odd dimensional manifolds. F\'{e}lix--Thomas \cite{F-Th} (see also \cite{F-Ta}) constructed a Sullivan model for the rational cohomology of configuration spaces of closed oriented even dimensional manifolds. Furthermore, Knudsen \cite{Kn} extended the result of F\'{e}lix--Thomas for general even dimensional manifolds.

Let $dim(M)=2m.$ Throughout the paper, we will consider the homology and cohomology over $\mathbb{Q}$. The symmetric algebra $Sym(A^{*})$ is the tensor product of a polynomial algebra and an exterior algebra:
$$ Sym(A^{*})=\bigoplus_{k\geq0}Sym^{k}(A^{*})=Poly(A^{even})\bigotimes Ext(A^{odd}), $$
where $Sym^{k}$ is generated by the monomials of length $k.$ The $n$-th suspension of the graded vector space $V$ is the graded vector space $V[n]$ with
$V[n]_{i} = V_{i-n},$ and the element of $V[n]$ corresponding to $a\in V$ is denoted $s^{n}a$. We write $H_{-*}(M;\mathbb{Q})$ for the graded vector space whose degree $-i$ part is
the $i$-th homology group of $M.$

Consider two graded vector spaces $V^{*}=H_{-*}(M;\mathbb{Q})[2m],$  $W^{*}=H_{-*}(M;\mathbb{Q})[4m-1]$
$$ V^{*}=\bigoplus_{i=0}^{2m}V^{i},\,\,W^{*}=\bigoplus_{j=2m-1}^{4m-1}W^{j},$$
and a degree 1 linear map $\partial:$
$$\partial|_{V^{*}}=0,\quad \partial|_{W^{*}}:\,W^{*}\simeq H_{*}(M;\mathbb{Q})\xrightarrow{\Delta} Sym^{2}(V^{*})\simeq  Sym^{2}(H_{*}(M;\mathbb{Q})),$$
where
$\Delta$ is diagonal co-multiplication corresponding to cup product. We choose bases in $V^{i}$ and $W^{j}$ as $$ V^{i}=\mathbb{Q}\langle v_{i,1},v_{i,2},\ldots\rangle,\quad W^{j}=\mathbb{Q}\langle w_{j,1},w_{j,2},\ldots\rangle $$
(the degree of an element is marked by the first lower index; $x_{i}^{q}$ stands for the product $x_{i}\wedge x_{i}\wedge\ldots\wedge x_{i}$ of $q$-factors). Always we take $V^{0}=\mathbb{Q}\langle v_{0}\rangle$. Now we consider the graded algebra:

$$ \Omega^{*,*}_{k}(M)=\bigoplus_{i\geq 0}\bigoplus_{\omega=0}^{\left\lfloor\frac{k}{2}\right\rfloor}
\Omega^{i,\omega}_{k}(M)=\bigoplus_{\omega=0}^{\left\lfloor\frac{k}{2}\right\rfloor}\,(Sym^{k-2\omega}(V^{*})\otimes Sym^{\omega}(W^{*})) $$
the (total) degree $i$ is given by the grading of $V^{*}$ and $W^{*},$ where $\omega$ is a weight grading. The differential $\partial$ extends over graded algebra by using Leibniz's rule. By definition of differential, we have 
$$\partial:\Omega^{*,*}_{k}(M)\longrightarrow\Omega^{*+1,*-1}_{k}(M).$$

\begin{theorem}\label{th.2} (\cite{F-Th} \cite{F-Ta} \cite{Kn}) 
	For a connected closed oriented manifold $M$ of even dimension, we have $$H^{*}(C_{k}(M))\simeq H^{*}(\Omega^{*,*}_{k}(M),\partial).$$
\end{theorem}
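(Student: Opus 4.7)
The plan is to follow the Félix--Thomas / Knudsen construction of a rational Chevalley--Eilenberg model for configuration spaces, proving the isomorphism in three stages: setting up the geometric input (Poincaré duality), identifying $\Omega^{*,*}_k(M)$ as a Chevalley--Eilenberg complex, and computing its cohomology.

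First, I would exploit the closedness and orientability of $M$ in even dimension $2m$ to produce a diagonal class $\Delta \in H^{2m}(M \times M;\mathbb{Q})$. Its Künneth decomposition $\Delta = \sum_\alpha \varepsilon_\alpha\, e_\alpha \otimes e_\alpha^\vee$, where $\{e_\alpha\}$ is a graded basis of $H^*(M;\mathbb{Q})$, $\{e_\alpha^\vee\}$ the Poincaré dual basis, and $\varepsilon_\alpha \in \{\pm 1\}$ the Koszul sign, is precisely the coproduct defining $\partial|_{W^*}$. Closedness and orientability are essential here: without a global diagonal class the differential could not be constructed at all, so the hypotheses of the theorem enter the proof at this very first step.

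Next, I would interpret $(\Omega^{*,*}_k(M), \partial)$ as a Chevalley--Eilenberg complex. The bigraded algebra is the weight-$k$ component of the symmetric algebra on $V^* \oplus W^*$, where each $V^*$-generator contributes weight $1$ and each $W^*$-generator weight $2$, giving $k = (k - 2\omega) + 2\omega$. In Knudsen's framework, $H_{-*}(M;\mathbb{Q})[2m-1]$ carries a shifted free graded Lie algebra structure, and the Chevalley--Eilenberg complex of the trivial module over this Lie algebra has precisely the bigraded form of $\Omega^{*,*}_k(M)$, with the degree shifts $2m$ for $V^*$ and $4m-1 = 2(2m-1)+1$ for $W^*$ being the Koszul signs of this Lie algebra presentation. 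The Leibniz-extended differential $\partial$ is exactly the Chevalley--Eilenberg differential induced by the Lie cobracket dual to the diagonal.

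The hard part is showing that this complex actually computes $H^*(C_k(M);\mathbb{Q})$. Knudsen's proof proceeds via factorization homology: rational chains on the total configuration space are identified with $\int_M \mathrm{Free}_{E_{2m}}(\mathbb{Q})$, and formality of the little disks operad $E_{2m}$ over $\mathbb{Q}$ (Kontsevich, Tamarkin) combined with $E_n$-Koszul duality between $E_n$-algebras and shifted Lie algebras (Ayala--Francis) reduces the computation to Chevalley--Eilenberg homology of the shifted Lie algebra attached to $M$; restricting to the weight-$k$ summand yields the theorem. The alternative Félix--Thomas route instead builds a CDGA Sullivan model for the ordered configuration space $F_k(M)$ directly, using Lambrechts--Stanley's Poincaré duality CDGA model of $M$, then takes $S_k$-invariants to descend to $C_k(M)$ and compares the result with $\Omega^{*,*}_k(M)$ by constructing an explicit quasi-isomorphism. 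In both approaches the conceptual content is the same --- the diagonal class controls the collision of points --- and the main obstacle is the formality / Koszul-duality input that licenses the replacement of configurations by a purely algebraic Lie-theoretic model.
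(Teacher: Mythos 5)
This statement is not proved in the paper at all: Theorem \ref{th.2} is imported verbatim from \cite{F-Th}, \cite{F-Ta} and \cite{Kn}, with no argument given, so there is no internal proof to compare yours against. Judged on its own terms, your write-up is a fair roadmap of where the theorem comes from in the literature --- the role of the diagonal class of a closed oriented even-dimensional manifold in defining $\partial|_{W^*}$, the reading of $\Omega^{*,*}_{k}(M)$ as the weight-$k$ piece of a Chevalley--Eilenberg complex for a two-step (shifted free) graded Lie algebra built from $H_{-*}(M;\mathbb{Q})$, and the factorization-homology/Koszul-duality route of Ayala--Francis and Knudsen. These are indeed the correct conceptual ingredients, and the weight bookkeeping $k=(k-2\omega)+2\omega$ matches the paper's definition of $\Omega^{*,*}_k(M)$.

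However, as a proof it is a survey rather than an argument: every load-bearing step (formality of $E_{2m}$ over $\mathbb{Q}$, the Koszul duality between $E_n$-algebras and shifted Lie algebras, the identification of $\bigoplus_k C_*(C_k(M))$ with factorization homology of a free $E_{2m}$-algebra, and the descent from ordered to unordered configurations) is invoked as a black box, so nothing is actually verified. One concrete inaccuracy: the F\'elix--Thomas proof does not use the Lambrechts--Stanley Poincar\'e duality CDGA model of $F_k(M)$ --- that model postdates \cite{F-Th} and its validity was only established much later by Idrissi and Campos--Willwacher; F\'elix--Thomas instead work with the $S_k$-invariants of a complex derived from the Cohen--Taylor spectral sequence and the Sullivan model of $M$ itself. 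If the intent is merely to justify quoting Theorem \ref{th.2}, the citation suffices; if the intent is to prove it, the formality and Koszul-duality inputs would have to be stated precisely and the comparison of the resulting Chevalley--Eilenberg complex with the specific bigraded algebra $\Omega^{*,*}_k(M)$ carried out explicitly, neither of which your sketch does.
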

The complex $(\Omega^{*,*}_{k}(M),\partial)$ is Chevalley–Eilenberg type complex. The Chevalley–Eilenberg complex has been a ubiquitous presence in the study of the configuration spaces of manifolds. Prominent examples of its appearance include the work of B\"{o}digheimer--Cohen--Taylor \cite{B-C-T} and F\'{e}lix--Thomas \cite{F-Th}, building on McDuff’s foundational work \cite{MD}; the
work of F\'{e}lix--Tanr\'{e} \cite{F-Ta} following Totaro \cite{T}; and the work of Knudsen \cite{Kn} building on work of Ayala--Francis \cite{AF}.
\section{Proof of main Theorem}
The cohomology ring of $\text{CP}^{m}$ is the truncated polynomial ring with single generator:
	$$H^{*}(\text{CP}^{m};\mathbb{Q})=\frac{\mathbb{Q}[x]}{\langle x^{m}\rangle},\quad \mbox{where  } deg(x)=2.$$ The corresponding two graded vector spaces are
	$$V^{*}=\langle v_{0},v_{2},\ldots,v_{2m}\rangle, \quad W^{*}=\langle w_{2m-3},w_{2m+3},\ldots,w_{4m-3}\rangle.$$
The differential $\partial$ is define on $V^{*}$ and $W^{*}$ as
\begin{align*}
\partial(v_{2i})=&\,\,0\qquad\qquad\qquad\quad 0\leq i\leq m,\\
\partial(W_{2i-1})=&\sum_{\substack{a+b=i,\\0\leq a, b\leq m}}v_{2a}v_{2b}\quad m\leq i\leq 2m.
\end{align*}
\begin{lemma}\label{lemma1}
The subspace $\Omega_{k-2}^{*,*}(\emph{CP}^{m}).(v_{2m}^{2}, w_{4m-1})< \Omega_{k}^{*,*}(\emph{CP}^{m})$ is acyclic for $k\geq 2.$
\end{lemma}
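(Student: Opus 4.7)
The plan is to construct an explicit chain contraction of
$$J := \Omega_{k-2}^{*,*}(\mathrm{CP}^m)\cdot(v_{2m}^2, w_{4m-1}).$$
The motivating observation is that in the defining formula for $\partial$ on $W^{*}$, the only pair $(a,b)$ with $a+b=2m$ and $0\le a,b\le m$ is $(m,m)$, so
$$\partial(w_{4m-1}) = v_{2m}^2.$$
Thus $(v_{2m}^2, w_{4m-1})$ already forms an acyclic two-term subcomplex on its own, and the task is simply to propagate this acyclicity across an arbitrary prefactor in $\Omega_{k-2}^{*,*}(\mathrm{CP}^m)$.

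First I would introduce the $\partial$-stable subalgebra
$$R := \mathbb{Q}[v_0,v_2,\ldots,v_{2m}]\otimes \Lambda(w_{2m-1}, w_{2m+1},\ldots,w_{4m-3}) \subset \Omega^{*,*}(\mathrm{CP}^m)$$
obtained by omitting only the top exterior generator $w_{4m-1}$. Since $\partial(v_{2i})=0$ and $\partial(w_{2i-1})$ is polynomial in the $v_{2a}$'s for $i<2m$, the subspace $R$ is closed under $\partial$, and as a graded vector space one has the direct sum decomposition $\Omega^{*,*}(\mathrm{CP}^m) = R \oplus R\cdot w_{4m-1}$. Expanding $\alpha v_{2m}^2 + \gamma w_{4m-1}$ for $\alpha,\gamma\in\Omega_{k-2}^{*,*}(\mathrm{CP}^m)$ with respect to this splitting identifies
$$J \;=\; v_{2m}^2\,R_{k-2}\;\oplus\; w_{4m-1}\,R_{k-2};$$
the sum is direct because multiplication by $v_{2m}^2$ is injective on $R$ (it is induced from multiplication by $v_{2m}^2$ on the polynomial factor).

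With $J$ in this form, I would define the homotopy
$$h\bigl(v_{2m}^2 r + w_{4m-1} s\bigr) := w_{4m-1} r \qquad (r,s\in R_{k-2})$$
and verify $\partial h + h\partial = \mathrm{id}_J$ by a one-line computation using $\partial(v_{2m}^2 r)=v_{2m}^2\,\partial r$ and $\partial(w_{4m-1}r) = v_{2m}^2 r - w_{4m-1}\,\partial r$, the latter sign coming from $|w_{4m-1}|=4m-1$ being odd. The only mild pitfall, and the main place where one has to be attentive, is the bookkeeping of Koszul signs on the exterior factor $w_{4m-1}$, so that the two contributions $w_{4m-1}\,\partial r$ (one from $\partial h$, one from $h\partial$) cancel cleanly; after that, $J$ is chain-contractible and in particular acyclic. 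The hypothesis $k\ge 2$ is exactly what is needed for $R_{k-2}$ to be defined.
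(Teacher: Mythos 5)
Your proposal is correct and is essentially the paper's own argument: the paper likewise expands each element of the subspace uniquely as $v_{2m}^{2}A+Bw_{4m-1}$ with $A,B$ free of $w_{4m-1}$ and contracts via $h(v_{2m}^{2}A+Bw_{4m-1})=w_{4m-1}A$. Your write-up merely makes explicit the points the paper leaves implicit, namely that $\partial(w_{4m-1})=v_{2m}^{2}$, that multiplication by $v_{2m}^{2}$ is injective so the expansion is unique, and the Koszul-sign check that $\partial h+h\partial=\mathrm{id}$.
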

\begin{proof}
An element in $\Omega_{k-2}^{*,*}(\text{CP}^{m}).(v_{2m}^{2}, w_{4m-1})$ has a unique expansion $v_{2m}^{2}A+Bw_{4m-1},$ where $A$ and $B$ have no monomial containing $w_{4m-1}.$ The operator $$h(v_{2m}^{2}A+Bw_{4m-1})=w_{4m-1}A$$ gives a homotopy $id\simeq 0.$

\end{proof}
We denote the reduced complex $(\Omega_{k}^{*,*}(\text{CP}^{m})/\Omega_{k-2}^{*,*}(\text{CP}^{m}).(v_{2m}^{2}, w_{4m-1}),\partial_{\text{induced}})$ by 
$$({}^{r}\Omega_{k}^{*,*}(\text{CP}^{m}),\partial).$$
\begin{corollary}\label{corollary1}
For $k\geq2,$ we have isomorphism $H^{*}({}^{r}\Omega_{k}^{*,*}(\emph{CP}^{m}),\partial)\cong H^{*}(C_{k}(\emph{CP}^{m})).$
\end{corollary}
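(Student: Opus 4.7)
The plan is to deduce Corollary \ref{corollary1} from Lemma \ref{lemma1} and Theorem \ref{th.2} via the short exact sequence of cochain complexes
$$0 \longrightarrow \Omega_{k-2}^{*,*}(\text{CP}^m)\cdot(v_{2m}^{2}, w_{4m-1}) \longrightarrow \Omega_{k}^{*,*}(\text{CP}^m) \longrightarrow {}^{r}\Omega_{k}^{*,*}(\text{CP}^m) \longrightarrow 0,$$
pass to the associated long exact sequence in cohomology, and use that the left-hand term is acyclic.

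The first thing I would verify is that the subspace $S := \Omega_{k-2}^{*,*}(\text{CP}^m)\cdot(v_{2m}^{2}, w_{4m-1})$ is genuinely a subcomplex, so that the above sequence makes sense as a sequence of cochain complexes and the quotient differential on ${}^{r}\Omega_{k}^{*,*}(\text{CP}^m)$ is well-defined. The decisive numerical input is that the only pair $(a,b)$ with $a+b = 2m$ and $0 \le a,b \le m$ is $(m,m)$, so the defining formula for $\partial$ on $W^{*}$ gives $\partial(w_{4m-1}) = v_{2m}^{2}$. Writing a general element of $S$ in the form $v_{2m}^{2}A + Bw_{4m-1}$ with $A,B$ containing no $w_{4m-1}$ factor, Leibniz's rule together with $\partial(v_{2m}^{2})=0$ shows that $\partial$ maps this to something of the same form, so $S$ is closed under $\partial$.

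Granting the subcomplex property, Lemma \ref{lemma1} supplies $H^{*}(S)=0$, and the resulting long exact sequence collapses to isomorphisms
$$H^{*}(\Omega_{k}^{*,*}(\text{CP}^m),\partial) \xrightarrow{\;\sim\;} H^{*}({}^{r}\Omega_{k}^{*,*}(\text{CP}^m),\partial)$$
induced by the quotient map, for every $k \ge 2$. Composing with the identification $H^{*}(\Omega_{k}^{*,*}(\text{CP}^m),\partial) \cong H^{*}(C_{k}(\text{CP}^m))$ of Theorem \ref{th.2} yields the claimed isomorphism. There is no real obstacle beyond the subcomplex check; the rest is the standard fact that quotienting by an acyclic subcomplex is a quasi-isomorphism.
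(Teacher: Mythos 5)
Your argument is exactly the intended one: the paper states Corollary \ref{corollary1} without proof as an immediate consequence of Lemma \ref{lemma1} and Theorem \ref{th.2}, and your route (verify that $\Omega_{k-2}^{*,*}(\text{CP}^{m})\cdot(v_{2m}^{2},w_{4m-1})$ is a subcomplex using $\partial(w_{4m-1})=v_{2m}^{2}$, then use acyclicity to see the quotient map is a quasi-isomorphism) is the standard way to fill that in. The explicit subcomplex check is a worthwhile addition, but there is no substantive difference from the paper's reasoning.
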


\begin{lemma}\label{lemma2}
The cohomology groups $H^{k(2m-2)+i}(C_{k}(\emph{CP}^{m}))$ are eventually vanish for $m\geq1$ and $i\geq4.$
\end{lemma}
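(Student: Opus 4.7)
The plan is to apply Corollary~\ref{corollary1} and then observe that, in the reduced complex, the chain group in total degree $k(2m-2)+i$ already vanishes for $i\geq 4$; no spectral sequence or induction on $k$ is needed, since the cohomology is forced to be zero by a simple count of monomials.

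To carry this out I would first describe an arbitrary monomial in the weight-$\omega$ piece ${}^{r}\Omega_{k}^{*,\omega}(\text{CP}^{m})$: it is a product of $k-2\omega$ factors chosen from $V^{*}=\langle v_{0},v_{2},\ldots,v_{2m}\rangle$ together with $\omega$ \emph{distinct} factors chosen from $\{w_{2m-1},w_{2m+1},\ldots,w_{4m-3}\}$. The $w$'s are exterior because they have odd degree, and $w_{4m-1}$ has been killed in the quotient; similarly, the relation $v_{2m}^{2}=0$ imposed by the quotient forces $v_{2m}$ to occur at most once in any given monomial.

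Next I would bound the total degree of such a monomial. The $V$-part contributes at most $(2m-2)(k-2\omega)+2$, attained by using $v_{2m}$ once and filling the other $k-2\omega-1$ slots with $v_{2m-2}$ (valid when $k-2\omega\geq 1$; when $k=2\omega$ the $V$-part is empty and its degree is $0$, which is even smaller). The $W$-part, being the sum of $\omega$ distinct values taken from $\{2m-1,2m+1,\ldots,4m-3\}$, is bounded by $\sum_{j=0}^{\omega-1}(4m-3-2j)=\omega(4m-2-\omega)$. Adding the two estimates,
$$\deg \;\leq\;(2m-2)k+2+\omega(2-\omega)\;\leq\;(2m-2)k+3,$$
since the maximum of $\omega(2-\omega)$ for $\omega\in\mathbb{Z}_{\geq 0}$ is $1$, attained at $\omega=1$. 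Hence for $i\geq 4$ the chain group ${}^{r}\Omega_{k}^{k(2m-2)+i,*}(\text{CP}^{m})$ is trivial, and Corollary~\ref{corollary1} yields $H^{k(2m-2)+i}(C_{k}(\text{CP}^{m}))=0$.

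I do not expect any serious obstacle: the whole argument is a one-line degree count, once the two killed generators are taken into account. The only point that warrants a sanity check is the boundary case $k=2\omega$, where the $V$-part bound degenerates; but there the total degree is simply $\omega(4m-2-\omega)$, and the comparison with $(2m-2)(2\omega)+3$ reduces to the elementary inequality $(\omega-1)^{2}\geq 0$. In fact the bound shows vanishing for every $k\geq 2$, which is stronger than the ``eventually'' in the statement.
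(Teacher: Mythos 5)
Your proposal is correct and follows essentially the same route as the paper: both arguments reduce, via Corollary~\ref{corollary1}, to observing that the reduced complex $({}^{r}\Omega_{k}^{*,*}(\text{CP}^{m}),\partial)$ has no elements in total degree above $k(2m-2)+3$. The only difference is that you actually verify the degree bound by maximizing over the weight $\omega$ (correctly accounting for the killed generators $v_{2m}^{2}$ and $w_{4m-1}$), whereas the paper simply asserts that $v_{2m-2}^{k-3}v_{2m}w_{4m-3}$ is the top-degree element.
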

\begin{proof}
For $k\geq 4,$ the highest degree element in the reduce complex $({}^{r}\Omega_{k}^{*,*}(\text{CP}^{m}),\partial)$ is $v_{2m-2}^{k-3}v_{2m}w_{4m-3}.$ The degree of $v_{2m-2}^{k-3}v_{2m}w_{4m-3}$ is $(2m-2)k+3.$ This implies that the cohomology groups $H^{k(2m-2)+i}(C_{k}(\text{CP}^{m}))$ are vanish for $m\geq1,$ $i\geq4$ and $k\geq 4.$ 
\end{proof}
\textit{Proof of Theorem \ref{theorem-main}.} If $m=1,$ then we have $H^{k(2m-2)+i}(C_{k}(\text{CP}^{m}))=H^{i}(C_{k}(\text{CP}^{1})).$ The church homological stability of configuration spaces (see Corollary 3 of \cite{C}) implies that the function 
$$k\mapsto \text{dim}H^{i}(C_{k}(\text{CP}^{1}))$$
is eventually constant for each $i\in \mathbb{N}.$

Let $m\geq 2$ and $k\geq 8.$ From Lemma \ref{lemma2}, we have vanishing:
$$H^{k(2m-2)+i}(C_{k}(\text{CP}^{m}))=0\quad \text{for } i\geq 4.$$ Now, we just focus on the cohomology groups $H^{k(2m-2)+i}(C_{k}(\text{CP}^{m}))$ for $i=1, 2, 3.$ There is no element of degree higher than $k(2m-2)+3$ in reduced complex $({}^{r}\Omega_{k}^{*,*}(\text{CP}^{m}),\partial).$ The elements of degrees $k(2m-2),$ $k(2m-2)+1,$ $k(2m-2)+2$ and $k(2m-2)+3$ are concentrated in the following two sub-complex:
$$0\longrightarrow {}^{r}\Omega_{k}^{k(2m-2)+2,2}(\text{CP}^{m})\longrightarrow {}^{r}\Omega_{k}^{k(2m-2)+3,1}(\text{CP}^{m})\longrightarrow 0$$
$$\ldots\longrightarrow{}^{r}\Omega_{k}^{k(2m-2),2}(\text{CP}^{m})\longrightarrow {}^{r}\Omega_{k}^{k(2m-2)+1,1}(\text{CP}^{m})\longrightarrow {}^{r}\Omega_{k}^{k(2m-2)+2,0}(\text{CP}^{m})\longrightarrow 0$$
where 
\begin{align*}
{}^{r}\Omega_{k}^{k(2m-2)+3,1}=&\langle v_{2m-2}^{k-3}v_{2m}w_{4m-3}\rangle,\\
{}^{r}\Omega_{k}^{k(2m-2)+2,2}=&\langle v_{2m-2}^{k-5}v_{2m}w_{4m-5}w_{4m-3}\rangle,\\
{}^{r}\Omega_{k}^{k(2m-2)+2,0}=&\langle v_{2m-2}^{k-1}v_{2m}\rangle,\\
{}^{r}\Omega_{k}^{k(2m-2)+1,1}=&\langle v_{2m-4}v_{2m-2}^{k-4}v_{2m}w_{4m-3}, v_{2m-2}^{k-3}v_{2m}w_{4m-5}, v_{2m-2}^{k-2}w_{4m-3}\rangle,\\
{}^{r}\Omega_{k}^{k(2m-2),2}=&\langle v_{2m-2}^{k-5}v_{2m}w_{4m-7}w_{4m-3}, v_{2m-4}v_{2m-2}^{k-6}v_{2m}w_{4m-5}w_{4m-3}, v_{2m-2}^{k-4}w_{4m-5}w_{4m-3}\rangle.
\end{align*}
The first sub-complex is exact because we have non-trivial differential:
$$\partial(v_{2m-2}^{k-5}v_{2m}w_{4m-5}w_{4m-3})=v_{2m-2}^{k-3}v_{2m}w_{4m-3}.$$ 
Now we investigate the second sub-complex. The differential $\partial$ is define on the bases of second sub-complex as:
\begin{align*}
\partial(v_{2m-2}^{k-1}v_{2m})=&0,\\
\partial(v_{2m-4}v_{2m-2}^{k-4}v_{2m}w_{4m-3})=&0,\\
\partial(v_{2m-2}^{k-3}v_{2m}w_{4m-5})=&v_{2m-2}^{k-1}v_{2m},\\
\partial(v_{2m-2}^{k-2}w_{4m-3})=&2v_{2m-2}^{k-1}v_{2m},\\
\partial(v_{2m-2}^{k-5}v_{2m}w_{4m-7}w_{4m-3})=&2v_{2m-4}v_{2m-2}^{k-4}v_{2m}w_{4m-3},\\
\partial(v_{2m-4}v_{2m-2}^{k-6}v_{2m}w_{4m-5})=&v_{2m-4}v_{2m-2}^{k-4}v_{2m}w_{4m-3},\\
\partial(w_{4m-3}, v_{2m-2}^{k-4}w_{4m-5}w_{4m-3})=&2v_{2m-4}v_{2m-2}^{k-4}v_{2m}w_{4m-3}+v_{2m-2}^{k-2}w_{4m-3}+\\
+&2v_{2m-2}^{k-3}v_{2m}w_{4m-5}. 
\end{align*}
Note that the monomial $v_{2m}^{a}$ is zero in ${}^{r}\Omega_{k}^{k(2m-2),2}(\text{CP}^{m})$ for $a>2.$ The dimensions of image and kernel of the map $\partial\,:\,{}^{r}\Omega_{k}^{k(2m-2)+1,1}(\text{CP}^{m})\longrightarrow {}^{r}\Omega_{k}^{k(2m-2)+2,0}(\text{CP}^{m})$ is 1 and 2, respectively. Moreover, the dimension of the image of map $\partial\,:\,{}^{r}\Omega_{k}^{k(2m-2),2}(\text{CP}^{m})\longrightarrow {}^{r}\Omega_{k}^{k(2m-2)+1,1}(\text{CP}^{m})$ is 2. From these computations, we conclude that the cohomology groups $$H^{k(2m-2)+i}(C_{k}(\text{CP}^{m}))$$ are vanish for $k\geq8$ and $i=1, 2, 3.$ We know that $\text{dim}H^{i}(M;\mathbb{Q})\cong \text{dim}H_{i}(M;\mathbb{Q}).$ Hence, for $i, m\in\mathbb{N},$ the Hilbert function 
$$k\mapsto \text{dim}H_{k(2m-2)+i}(C_{k}(\text{CP}^{m});\mathbb{Q})$$
is eventually a quasi-polynomial.  $\hfill \square$\\

\noindent\textbf{Acknowledgement}\textit{.} The Author gratefully acknowledge the support from the ASSMS GC, university Lahore. This research is partially supported by Higher Education Commission of Pakistan.

\vskip 0,65 true cm

\medskip

\null\hfill  Abdus Salam School of Mathematical Sciences,\\
\null\hfill  GC University Lahore, Pakistan. \\
\null\hfill E-mail: {yameen99khan@gmail.com}

\end{document}